\def\bewend{\hspace{\stretch{1}}$\square$}
\numberwithin{equation}{section} 
\numberwithin{figure}{section} 
\theoremstyle{plain}
\newtheorem{thm}{Theorem}
  \theoremstyle{plain}
  \newtheorem{cor}{Corollary}[section]
  \theoremstyle{plain}
  \newtheorem{lem}[cor]{Lemma}
  \theoremstyle{plain}
    \newtheorem{prop}[cor]{Proposition}
    \theoremstyle{remark}
       \newtheorem{rem}[cor]{Remark}
	 \theoremstyle{plain}
\theoremstyle{plain}
\renewcommand{\phi}{\varphi}
\def\R{\mathbb{R}}
\def\N{\mathbb{N}}
\def\U{\mathcal{U}}
\def\var{\epsilon}
\begin{document}
\onehalfspacing

\title
{A note on Diophantine fractals for $\alpha$--L\"uroth systems.}

\author{Sara Munday}

\address{Mathematical Institute, University of St. Andrews, North Haugh, St.
Andrews KY16 9SS, Scotland}

\email{sam20@st-andrews.ac.uk}

\date{\today}
\maketitle

\section{Introduction}

In this paper, we give the Hausdorff dimensions of certain sets of real numbers described in terms of  the $\alpha$-L\"uroth expansion. So, let us first describe this expansion.

Let $\alpha:=\{A_n:n\in\N\}$ denote a countable partition of the unit interval $\U$, consisting of right-closed, left-open intervals which we always assume to be ordered from right to left, starting from $A_1$. Let $a_n$ denote the Lebesgue measure $\lambda(A_n)$ of the
atom $A_n\in\alpha$ and let $t_n:=\sum_{k=n}^\infty a_k$
denote the Lebesgue measure of the $n$-th tail of $\alpha$.
Then, for a given partition $\alpha$, define the map $L_\alpha:\U\to\U$ by setting
\[
L_{\alpha}(x):=
\left\{
	\begin{array}{ll}
	    ({t_n-x})/a_n & \text{ for }x\in A_n,\ n\in\N,\\
	  0 & \hbox{ if } x=0.
	\end{array}
      \right.
\]
The map $L_\alpha$ is referred to as the \index{$\alpha$-L\"uroth map}\textit{$\alpha$-L\"uroth map}.

for each partition $\alpha$ the map $L_{\alpha}$ gives rise to a series expansion of
numbers in
the interval $\mathcal{U}$, which we refer to as the
\index{expansion!$\alpha$-L\"uroth}\textit{$\alpha$-L\"{u}roth expansion}.
That is,
let $x\in\mathcal{U}$ be given and let the finite or infinite sequence $(\ell_k)_{k\geq1}$ be determined by $L_{\alpha}^{k-1}(x) \in
A_{\ell_{k}}$. Note that the sequence will be finite if at some point we have that $L_{\alpha}^{k}(x)=0$ and also note that each finite sequence has the property that the final entry is at least equal to 2. This sequence gives rise
to an alternating series expansion of each $x\in\U$, which is given by
\[
x=t_{\ell_1}+
\sum_{n=2}^\infty(-1)^{n-1}\left(\textstyle\prod\limits_{i<n}a_{\ell_i}\right)
t_{\ell_n}=t_{\ell_1}-a_{\ell_1}t_{\ell_2}+a_{\ell_1}a_{\ell_2}t_{\ell_3}-\ldots
\]
Let us denote finite $\alpha$-L\"uroth expansions by $[\ell_1, \ell_2, \ldots, \ell_k]_\alpha$, for some $k\in\N$, and  infinite ones by
$x=[ \ell_1, \ell_2,
\ell_3, \ldots]_{\alpha}$. Each infinite expansion is unique.

\begin{rem}
Note that this series expansion is a particular type of \textit{generalised L\"uroth series}, a concept which was introduced by Barrionuevo \textit{et al.} in \cite{BBDK} (Also see the book by Dajani and Kraaikamp, \cite{DK}).
\end{rem}

Throughout this paper, we will make the additional assumption that the tails of the partition $\alpha$ satisfy the power law $t_n=\psi(n)\cdot n^{-\theta}$, where $\psi:\N\to \R^+$ is a slowly-varying function\footnote{A  measurable function $f:\R^{+} \to \R^{+}$ is said
to be \textit{slowly-varying}  if $
\lim_{x\to\infty}f(x y)/f(x)=1$, for all $y>0$.}. Such a partition is said to be \index{expansive partition}\textit{expansive of exponent $\theta\geq0$}. Also, we always assume that every partition $\alpha$ is \textit{eventually decreasing}, that is, that for all sufficiently large $n\in\N$, we have that $a_n<a_{n-1}$. The following Proposition appears in \cite{KMS}.

\begin{prop}\label{mdt}
If $\alpha$  is  expansive of exponent $\theta>0$ and eventually decreasing, then we
    have that
    \[
    a_{n}\sim n^{-1} t_n.\]
\end{prop}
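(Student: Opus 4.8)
The plan is to recognise Proposition~\ref{mdt} as an instance of the \emph{monotone density theorem} (whence the label), and to prove it by a direct blocking argument that uses the two hypotheses separately: expansiveness supplies the regular variation of the tails, while eventual monotonicity of the atoms $a_n$ is what converts this ``averaged'' information into pointwise control of a single increment. The starting observation is that $a_n=t_n-t_{n+1}$, so that $(t_n)$ is a decreasing null sequence, regularly varying of index $-\theta$, whose consecutive differences are the $a_n$. It is worth recording at the outset \emph{why} the naive computation fails: writing $a_n/t_n=1-t_{n+1}/t_n$ and using $t_{n+1}/t_n\to1$ only yields $a_n/t_n\to0$, with no access to the true order $n^{-1}$. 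Regular variation of $(t_n)$ controls ratios $t_{\lfloor\lambda n\rfloor}/t_n$ but says nothing about a single difference $t_n-t_{n+1}$; bridging this gap is the real content of the statement, and it is precisely here that monotonicity must enter.

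For the lower bound, fix $\lambda>1$ and set $N=\lfloor\lambda n\rfloor$. Since $(a_k)$ is eventually decreasing, for large $n$ every term in the block $\{n,\dots,N-1\}$ is at most $a_n$, so
\[
t_n-t_N=\sum_{k=n}^{N-1}a_k\le (N-n)\,a_n,\qquad\text{whence}\qquad a_n\ge\frac{t_n-t_N}{N-n}.
\]
Dually, fix $\mu\in(0,1)$ and set $M=\lfloor\mu n\rfloor$; now each term in $\{M,\dots,n\}$ is at least $a_n$, giving
\[
t_M-t_{n+1}=\sum_{k=M}^{n}a_k\ge (n-M+1)\,a_n,\qquad\text{whence}\qquad a_n\le\frac{t_M-t_{n+1}}{n-M+1}.
\]

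To finish, I would feed the regular variation of the tails into these two inequalities. By expansiveness and slow variation of $\psi$ (via the uniform convergence theorem for regularly varying functions) one has $t_{\lfloor\lambda n\rfloor}/t_n\to\lambda^{-\theta}$ for each fixed $\lambda>0$, while $t_{n+1}/t_n\to1$, $N-n\sim(\lambda-1)n$ and $n-M+1\sim(1-\mu)n$. Multiplying the displayed inequalities by $n/t_n$ and passing to the limit yields
\[
\frac{1-\lambda^{-\theta}}{\lambda-1}\le\liminf_{n\to\infty}\frac{n\,a_n}{t_n}\le\limsup_{n\to\infty}\frac{n\,a_n}{t_n}\le\frac{\mu^{-\theta}-1}{1-\mu}.
\]
Fixing any admissible $\lambda,\mu$ already shows that $n a_n/t_n$ is trapped between two positive constants, i.e. $a_n\asymp n^{-1}t_n$, which is the assertion; letting $\lambda\downarrow1$ and $\mu\uparrow1$ sharpens both outer bounds to the common value $\theta$, so in fact $n a_n/t_n\to\theta$ (the exact constant predicted by the monotone density theorem).

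The step I expect to be the crux is the passage from the block estimates to the limit, specifically the honest justification of $t_{\lfloor\lambda n\rfloor}/t_n\to\lambda^{-\theta}$: one cannot simply substitute $\lambda n$ into $t_n=\psi(n)n^{-\theta}$, since $\lfloor\lambda n\rfloor$ is not $\lambda$ times an integer and $\psi$ is only slowly varying, so the standard regular-variation machinery is doing genuine work here. Everything else --- positivity, the telescoping of the block sums, and the elementary limits $(1-\lambda^{-\theta})/(\lambda-1)\to\theta$ and $(\mu^{-\theta}-1)/(1-\mu)\to\theta$ --- is routine. I would also flag explicitly where eventual monotonicity is used (the two block inequalities), since without it a regularly varying tail sequence may have increments that oscillate and fail $a_n\asymp n^{-1}t_n$.
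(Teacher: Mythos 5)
Your proof is correct, but be aware that this paper contains no proof of Proposition \ref{mdt} to compare against: the proposition is quoted from the preprint \cite{KMS}, where it rests on the monotone density theorem for regularly varying functions (see \cite{BinghamGoldieTeugels:89}) --- exactly the result you identified from the label, and your blocking argument is in substance the standard proof of that theorem specialised to sequences. The argument is sound: the telescoping estimates $t_n-t_{\lfloor\lambda n\rfloor}\le(\lfloor\lambda n\rfloor-n)\,a_n$ and $(n-\lfloor\mu n\rfloor+1)\,a_n\le t_{\lfloor\mu n\rfloor}-t_{n+1}$ are precisely where eventual monotonicity enters, the directions of both inequalities are right, and the limit $t_{\lfloor\lambda n\rfloor}/t_n\to\lambda^{-\theta}$ is legitimately justified by the uniform convergence theorem for slowly varying functions, as you flag (with $y_n:=\lfloor\lambda n\rfloor/n\to\lambda$, uniform convergence on compact subsets of $(0,\infty)$ gives $\psi(y_n n)/\psi(n)\to1$). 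One discrepancy is worth recording: your argument establishes the sharper statement $n\,a_n/t_n\to\theta$, so if $\sim$ is read as genuine asymptotic equivalence the proposition as literally printed is off by the constant factor $\theta$ and should read $a_n\sim\theta\, n^{-1}t_n$; note also that the hypothesis $\theta>0$ is exactly what keeps your lower bound $(1-\lambda^{-\theta})/(\lambda-1)$ strictly positive. This constant is immaterial for everything downstream, since the paper immediately weakens the proposition to $a_n\asymp\psi(n)\cdot n^{-(1+\theta)}$, and that two-sided comparability already follows from your displayed bounds for a single fixed admissible pair $\lambda,\mu$, without the final limits $\lambda\downarrow1$ and $\mu\uparrow1$.
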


Our first main theorem concerns $\alpha$-Good sets, which are defined as follows. For each $N\in\N$, let the set $G_N^{(\alpha)}$ be defined by
\[
G_N^{(\alpha)}:=\{x=[\ell_1(x), \ell_2(x), \ldots]_\alpha\in\U:\ell_i(x)> N\text{ for all }i\in\N\}.
\]
Note that the name ``Good'' here refers to I.J. Good \cite{Good}, for the similar results he proved for continued fractions, and not to any supposed nice property of these sets.
We have the following result.
\begin{thm}\label{origgood}

\[
\lim_{N\to \infty}\dim_H\left(G_N^{(\alpha)}\right)= \frac{1}{1+\theta}.
\]
\end{thm}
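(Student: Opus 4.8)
The plan is to reduce the computation to the convergence behaviour of the digit sums $\sum_{n>N} a_n^s$ and then track how these behave as $N\to\infty$. First I would record the geometry of the cylinder sets. Since $L_\alpha$ restricts to an affine bijection of each atom $A_n=(t_{n+1},t_n]$ onto $\U$ with $|L_\alpha'|=1/a_n$, the inverse branches $\phi_n(y)=t_n-a_n y$ are contracting similarities of ratio $a_n$, and the cylinder $[\ell_1,\ldots,\ell_k]_\alpha$ is an interval of length $\prod_{i=1}^k a_{\ell_i}$. Thus $G_N^{(\alpha)}$ is, up to the countable set of finite expansions, the limit set of the infinite system $\{\phi_n:n>N\}$, which satisfies the open set condition because the atoms are pairwise disjoint. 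By Proposition~\ref{mdt} we have $a_n\sim n^{-1}t_n=\psi(n)\,n^{-(1+\theta)}$, and since $\psi$ is slowly varying the series $\sum_n a_n^s$ has abscissa of convergence exactly $\sigma:=1/(1+\theta)$: it converges for $s>\sigma$ and diverges for $s<\sigma$, independently of how many initial terms are discarded.

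For the upper bound I would fix any $s>\sigma$ and set $S_N:=\sum_{n>N}a_n^s$, noting that $S_N\to 0$ as $N\to\infty$ since it is the tail of a convergent series. For $N$ large enough that $S_N<1$, the depth-$k$ cylinders with all digits exceeding $N$ cover $G_N^{(\alpha)}$ by intervals of diameter at most $\rho_N^{\,k}$, where $\rho_N:=\sup_{n>N}a_n<1$, and the associated $s$-sum factorises as $(S_N)^k\to 0$. Hence $\mathcal H^s\bigl(G_N^{(\alpha)}\bigr)=0$ and $\dim_H\bigl(G_N^{(\alpha)}\bigr)\le s$; since the dimensions are non-increasing in $N$ (the sets are nested), this yields $\limsup_N\dim_H\bigl(G_N^{(\alpha)}\bigr)\le s$ for every $s>\sigma$, i.e.\ $\limsup_N\dim_H\bigl(G_N^{(\alpha)}\bigr)\le\sigma$.

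For the lower bound I would prove the stronger statement that $\dim_H\bigl(G_N^{(\alpha)}\bigr)\ge\sigma$ for \emph{every} $N$, which is the essential ``Good'' phenomenon: although the sets shrink to essentially nothing as $N\to\infty$, their dimension cannot fall below $\sigma$. Fix $s<\sigma$. Because $\sum_{n>N}a_n^s=\infty$, I can choose a finite set $F\subseteq\{n:n>N\}$ with $\sum_{n\in F}a_n^s\ge 1$. The finite sub-IFS $\{\phi_n:n\in F\}$ generates a self-similar set $K_F$ satisfying the open set condition, and its coding embeds into the $\alpha$-L\"uroth digit structure (the $\phi_n$ are precisely the inverse branches of $L_\alpha$), so $K_F\subseteq G_N^{(\alpha)}$. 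Its Hausdorff dimension $d_F$ solves the Moran equation $\sum_{n\in F}a_n^{d_F}=1$, and monotonicity of $t\mapsto\sum_{n\in F}a_n^{t}$ together with $\sum_{n\in F}a_n^s\ge 1$ forces $d_F\ge s$. Letting $s\uparrow\sigma$ gives $\dim_H\bigl(G_N^{(\alpha)}\bigr)\ge\sigma$; combined with the upper bound this yields $\lim_N\dim_H\bigl(G_N^{(\alpha)}\bigr)=\sigma=1/(1+\theta)$.

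The main obstacle I anticipate lies entirely in the two asymptotic inputs rather than in the geometry: one must verify that slow variation of $\psi$ pins the abscissa of convergence of $\sum_n a_n^s$ at exactly $1/(1+\theta)$, so that the \emph{same} critical value governs both the covering estimate and the self-similar estimate, and one must ensure the lower bound is genuinely uniform in $N$ --- it is the self-similar subsets $K_F$, available at every level $N$ precisely because $\sum_{n>N}a_n^s$ diverges below $\sigma$, that prevent the dimension from collapsing. The cylinder geometry, the factorisation of the $s$-sum, and the finite Moran equation are otherwise routine.
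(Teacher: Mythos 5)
Your proposal is correct, and while your upper bound is essentially the paper's --- both cover $G_N^{(\alpha)}$ by the depth-$k$ cylinders with all digits exceeding $N$ and factorise the $s$-dimensional sum over the cover; you package the $\varepsilon$-bookkeeping as the statement that $\sum_n a_n^s$ has critical exponent exactly $1/(1+\theta)$, where the paper works with the explicit bounds $n^{-(1+\theta+\varepsilon)}\leq a_n\leq n^{-(1+\theta-\varepsilon)}$ and an integral test --- your lower bound takes a genuinely different route. The paper restricts to $G_{N,M}^{(\alpha)}$ with digits in $[N,M]$, where $M$ is chosen so that $S=\sum_{i=N}^{M}1/i>1$, puts the hand-built mass distribution $\nu(C_\alpha(\ell_1,\ldots,\ell_k))=S^{-k}(\ell_1\cdots\ell_k)^{-1}$ on it, and verifies the Frostman estimate directly, including the geometric step that a ball $B(x,r)$ meets at most $M-N$ level-$k$ cylinders of comparable $\nu$-measure. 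You instead extract a finite subsystem $F\subset\{n:n>N\}$ with $\sum_{n\in F}a_n^s\geq 1$ (possible for every $s<1/(1+\theta)$ and every $N$, because the tail series diverges there) and quote the Hutchinson--Moran dimension formula for finite self-similar systems with the open set condition. The two constructions are morally the same --- the paper's weights $S^{-1}\ell^{-1}$ are an approximate Moran measure, since $\ell^{-1}$ is comparable to $a_\ell^{1/(1+\theta)}$ up to the slowly varying correction --- but yours outsources the ball-counting geometry to a standard theorem and yields the cleaner uniform statement $\dim_H\left(G_N^{(\alpha)}\right)\geq 1/(1+\theta)$ for every $N$ (which also follows from the paper's version by nestedness, but less directly), whereas the paper's explicit Frostman machinery is precisely what is reused later in Lemma \ref{lowboundGinfty} and Theorem \ref{jarlu}, where the relevant subsystems have digits tending to infinity and admit no finite self-similar model. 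Two points you should make fully explicit: the attractor $K_F$ lies in $G_N^{(\alpha)}$ only up to the countable set of cylinder endpoints carrying double codings, which is harmless for dimension and which you do gesture at; and the abscissa claim may only be invoked for strict inequalities $s>1/(1+\theta)$ or $s<1/(1+\theta)$, since behaviour at the critical exponent depends on $\psi$ --- which is how you use it. (Like the paper's own proof, your argument relies on Proposition \ref{mdt} and hence implicitly on $\theta>0$.)
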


For our second main result, let us consider the following sets. Define
\[
F_{\infty}^{(\alpha)}:=\left\{x=[\ell_1(x), \ell_2(x), \ldots]_\alpha:\lim_{n\to\infty}\ell_n(x)=\infty\text{\ and }\ell_n(x)\geq\ell_{n-1}(x)\right\}
\]
and
\[
G_{\infty}^{(\alpha)}:=\left\{x=[\ell_1(x), \ell_2(x), \ldots]_\alpha:\lim_{n\to\infty}\ell_n(x)=\infty\right\}.
\]
It is immediately apparent that $F_{\infty}^{(\alpha)}\subset G_{\infty}^{(\alpha)}$, so that $\dim_H\left(F_{\infty}^{(\alpha)}\right)\leq \dim_H\left(G_{\infty}^{(\alpha)}\right)$. We aim to prove the following theorem.

\begin{thm}\label{goodtwo}
\[\dim_H\left(F_\infty^{(\alpha)}\right)=\dim_H\left(G_\infty^{(\alpha)}\right)=\frac{1}{1+\theta}.\]
\end{thm}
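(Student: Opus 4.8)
The plan is to prove both equalities by sandwiching. Since the inclusion $F_\infty^{(\alpha)}\subseteq G_\infty^{(\alpha)}$ is already noted, it suffices to establish the upper bound $\dim_H(G_\infty^{(\alpha)})\leq 1/(1+\theta)$ together with the matching lower bound $\dim_H(F_\infty^{(\alpha)})\geq 1/(1+\theta)$; the chain $1/(1+\theta)\leq \dim_H(F_\infty^{(\alpha)})\leq \dim_H(G_\infty^{(\alpha)})\leq 1/(1+\theta)$ then forces all three quantities to coincide. Both bounds will be read off from the cylinder structure of the $\alpha$-L\"uroth map, for which a word $(\ell_1,\dots,\ell_n)$ determines an interval $C(\ell_1,\dots,\ell_n)$ of length $\prod_{k=1}^n a_{\ell_k}$, and throughout I would use the asymptotic $a_n\sim \psi(n)\,n^{-(1+\theta)}$, which follows from Proposition \ref{mdt} and the expansivity assumption.

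For the upper bound, fix $N\in\N$. If $\ell_n(x)\to\infty$, then all but finitely many digits of $x$ exceed $N$, so $G_\infty^{(\alpha)}\subseteq\bigcup_{K\geq 0}\{x:\ell_n(x)>N \text{ for all } n>K\}$. Each set in this countable union is itself a countable union, over the prefixes $(\ell_1,\dots,\ell_K)$, of the images of $G_N^{(\alpha)}$ under the corresponding inverse branch of $L_\alpha^K$. Since these inverse branches are affine, hence bi-Lipschitz onto their images, and since Hausdorff dimension is invariant under bi-Lipschitz maps and stable under countable unions, every piece has dimension $\dim_H(G_N^{(\alpha)})$. Consequently $\dim_H(G_\infty^{(\alpha)})\leq \dim_H(G_N^{(\alpha)})$ for every $N$, and letting $N\to\infty$ and invoking Theorem \ref{origgood} gives $\dim_H(G_\infty^{(\alpha)})\leq 1/(1+\theta)$.

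For the lower bound I would build an explicit Cantor subset of $F_\infty^{(\alpha)}$. Fix $c>1$, set $b_k:=\lfloor c^k\rfloor$ and $D_k:=\{n\in\N:b_k\leq n<b_{k+1}\}$, and let $E$ be the set of all $x=[\ell_1,\ell_2,\dots]_\alpha$ with $\ell_k\in D_k$ for every $k$. Because $\max D_k<\min D_{k+1}$ and $\min D_k\to\infty$, every such sequence is strictly increasing with $\ell_k\to\infty$, so $E\subseteq F_\infty^{(\alpha)}$. Equip $E$ with the measure $\mu$ that distributes mass uniformly among the admissible children at each level, i.e. $\mu(C(\ell_1,\dots,\ell_n))=\prod_{k=1}^n |D_k|^{-1}$. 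Using $|D_k|\asymp c^k$ and $a_{\ell_k}\asymp \psi(\ell_k)\,\ell_k^{-(1+\theta)}$ with $\ell_k\asymp c^k$, both $-\log\mu(C(\ell_1,\dots,\ell_n))=\sum_{k\leq n}\log|D_k|$ and $-\log|C(\ell_1,\dots,\ell_n)|=\sum_{k\leq n}\log(1/a_{\ell_k})$ are asymptotic to a constant multiple of $n^2$, the slowly-varying factor contributing only a lower-order term, and their ratio tends to $1/(1+\theta)$. The mass distribution principle then yields $\dim_H(E)\geq 1/(1+\theta)$, whence $\dim_H(F_\infty^{(\alpha)})\geq 1/(1+\theta)$.

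The main obstacle is this last step: the mass distribution principle requires an estimate $\mu(B(x,r))\leq C r^{s}$ for balls, whereas the construction controls $\mu$ only on cylinders. The point to be checked carefully is that a ball $B(x,r)$ with $|C_{n+1}(x)|\leq r<|C_n(x)|$ can meet only boundedly many depth-$n$ cylinders of the construction, with comparable $\mu$-measures. This is where regular variation of $(a_n)$ is essential: it guarantees that neighbouring cylinders within a block differ in length only by a bounded factor, so that $\mu(B(x,r))\leq C\,\mu(C_n(x))$, while the extra factor $a_{\ell_{n+1}}$ relating $r$ to $|C_{n+1}(x)|$ is of lower order and does not affect the liminf of the exponent. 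Once this geometric comparison is secured, the ratio computation above completes the proof.
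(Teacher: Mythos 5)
Your proposal is correct, and for the upper bound it is essentially the paper's argument: the paper introduces $G_{N,n_0}^{(\alpha)}=\{x:\ell_n(x)>N\text{ for all }n\geq n_0\}$, observes it has the same dimension as $G_N^{(\alpha)}$, and deduces $\dim_H\bigl(G_\infty^{(\alpha)}\bigr)\leq 1/(1+\theta)$ from Theorem \ref{origgood}; your decomposition over the cut-off $K$ and the affine inverse branches is exactly the justification the paper leaves implicit (indeed your countable union over $K$ is the correct reading of the paper's slightly loose claim that a single $n_0$ works for all of $G_\infty^{(\alpha)}$). The lower bound is where you genuinely diverge. The paper builds the subset $F_{f_\varepsilon,g}^{(\alpha)}\subset F_\infty^{(\alpha)}$ from a slowly varying, nondecreasing digit window $f_\varepsilon(n)\leq\ell_n\leq g(n)$, with $g$ chosen so that $S_n=\sum_{i=f_\varepsilon(n)}^{g(n)}1/i>1$, equips it with the harmonic-weight measure $\nu(C_\alpha(\ell_1,\dots,\ell_k))=\prod_{i\leq k}(S_i\ell_i)^{-1}$, and gets the scale-by-scale bound $\nu(C)\leq\lambda(C)^{1/(1+\theta+\varepsilon)}$, yielding dimension at least $1/(1+\theta+\varepsilon)$ before letting $\varepsilon\to0$; your geometric blocks $D_k=[b_k,b_{k+1})$ with the uniform branching measure instead make $-\log\mu(C_n)$ and $-\log\lambda(C_n)$ both of order $n^2$, extract the exponent $1/(1+\theta)$ in a single limit, and absorb the slowly varying factor as an $o(n^2)$ error --- tidier bookkeeping that dispenses with the $\varepsilon$-parametrized family $(f_\varepsilon, g, S_n)$ and with the extra $\delta$-loss the paper incurs via $g(k+1)\leq(1/r)^\delta$. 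The step you flag but do not prove --- that $B(x,r)$ with $\lambda(C_{n+1}(x))\leq r<\lambda(C_n(x))$ meets only boundedly many level-$n$ cylinders of comparable (here equal or zero) mass --- is precisely the paper's Lemma \ref{int}, which gives $B(x,r)\subset\bigcup_{i=-1}^{1}C_\alpha(\ell_1(x),\dots,\ell_n(x)+i)$. Be aware that its proof needs more than the bounded ratio $a_{m+1}/a_m$ you invoke: it also needs the positional estimate that the overhang of $B(x,r)$ past the far endpoint of $C_n(x)$ is at most $\lambda(C_n(x))\,t_{\ell_{n+1}(x)}$, reducing matters to the inequality $a_{\ell_n}t_{\ell_{n+1}}\leq a_{\ell_n+1}$, which the paper establishes using the monotonicity $\ell_{n+1}\geq\ell_n$ and a computation with $\varepsilon<\theta/6$. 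In your construction this is in fact easier: $\ell_{n+1}\geq b_{n+1}$ forces $t_{\ell_{n+1}}\to0$ uniformly while $a_{\ell_n+1}/a_{\ell_n}$ is bounded below, so the lemma holds verbatim for your set $E$ (which does lie in $F_\infty^{(\alpha)}$, since your digits are strictly increasing). With that lemma supplied, your argument is complete and delivers the same conclusion.
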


Our final main result concerns the following situation. Fix a sequence $(s_n)_{n\in\N}$ of natural numbers with the property that $\lim_{n\to\infty}s_n=\infty$. Then, let $\sigma$ be given by
\[
\sigma:=\liminf_{n\to\infty}\frac{\log(s_1\ldots s_n)}{(1+\theta)\log(s_1\ldots s_n)+\theta \log(s_{n+1})}=\frac{1}{(1+\theta)+\theta\left(\limsup\limits_{n\to\infty}\frac{\log(s_{n+1})}{\log(s_1\ldots s_n)}\right)}.
\]
Finally, let $N>3$ and define the set
\[
J_\sigma^{(\alpha)}:=\{x=[\ell_1(x), \ell_2(x), \ldots]_\alpha:s_n\leq \ell_n(x)<Ns_n\text{ for all }n\in\N\}.
\]
We will prove the following theorem.

\begin{thm}\label{jarlu}
\[\dim_H\left(J_\sigma^{(\alpha)}\right)=\sigma.\]
\end{thm}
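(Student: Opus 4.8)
The plan is to treat this as a Jarník--Besicovitch type statement and to compute the dimension through a matched pair of covering (upper) and mass-distribution (lower) arguments, exactly in the spirit of the proofs of Theorems \ref{origgood}--\ref{goodtwo}. First I would record the basic metric data of the $\alpha$-L\"uroth cylinders. Since $L_\alpha$ maps each atom $A_n$ affinely onto $\U$ with slope $1/a_n$, the cylinder $[\ell_1,\dots,\ell_n]_\alpha$ is an interval of length $\prod_{i=1}^n a_{\ell_i}$, and the sub-cylinders $[\ell_1,\dots,\ell_n,j]_\alpha$, $j\in\N$, tile it in the order dictated by the (alternating) orientation of the inverse branches. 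Proposition \ref{mdt} gives $a_m\asymp m^{-(1+\theta)}\psi(m)$, and since $\log\psi(x)=o(\log x)$ for slowly varying $\psi$, all factors $\psi$ (and the combinatorial factor $(N-1)^n$ below) contribute only $o(\log S_n)$ to the logarithms that follow, where $S_n:=s_1\cdots s_n$. Writing $D_n$ for the common order of magnitude of an admissible level-$n$ cylinder, one has $D_n\asymp S_n^{-(1+\theta)}$ up to slowly varying factors; each such cylinder contains $\asymp(N-1)s_{n+1}$ admissible children (this is where $N>3$ is used, to guarantee at least a couple), and these children are packed contiguously into a \emph{block} of length $B_{n+1}\asymp D_n s_{n+1}^{-\theta}$, sitting inside the parent of length $D_n\gg B_{n+1}$.

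For the upper bound I would cover $J_\sigma^{(\alpha)}$ by these blocks: there are $\asymp (N-1)^nS_n$ of them (one per admissible level-$n$ cylinder), each of diameter $\asymp B_{n+1}$. Setting $Q_n:=(1+\theta)\log S_n+\theta\log s_{n+1}=-\log B_{n+1}+o(Q_n)$, the $s$-dimensional sum over this cover has logarithm $(\sigma_n-s)Q_n+o(Q_n)$, where $\sigma_n:=\log S_n/Q_n$ and $\liminf_n\sigma_n=\sigma$. Hence for any fixed $s>\sigma$ there is a subsequence along which $\sigma_n\to\sigma<s$, so the covering sum tends to $0$; this yields $\dim_H\big(J_\sigma^{(\alpha)}\big)\le s$, and letting $s\downarrow\sigma$ gives $\dim_H\big(J_\sigma^{(\alpha)}\big)\le\sigma$. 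The essential point is that the block cover, rather than the naive level-$n$ cylinder cover, is the efficient one; the latter would only produce the weaker bound $1/(1+\theta)$.

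For the lower bound I would place on $J_\sigma^{(\alpha)}$ the measure $\mu$ that distributes mass uniformly among admissible children, so that $\mu\big([\ell_1,\dots,\ell_n]_\alpha\big)\asymp \big((N-1)^nS_n\big)^{-1}$, and then estimate $\mu(B(x,r))$ at every scale. Fixing $x\in J_\sigma^{(\alpha)}$ and the level $n$ with $D_{n+1}\le r<D_n$, I would split into the regime $D_{n+1}\le r\le B_{n+1}$, where the ball meets $\asymp r/D_{n+1}$ contiguous children so that $\mu(B(x,r))\asymp (r/B_{n+1})\,\mu\big([\ell_1,\dots,\ell_n]_\alpha\big)$, and the regime $B_{n+1}\le r< D_n$, where the ball meets only $O(1)$ blocks and $\mu(B(x,r))\asymp\mu\big([\ell_1,\dots,\ell_n]_\alpha\big)$. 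In both regimes the local exponent $\log\mu(B(x,r))/\log r$ is minimised at the block scale $r=B_{n+1}$, where it equals $\log S_n/Q_n=\sigma_n$; thus $\liminf_{r\to0}\log\mu(B(x,r))/\log r\ge\liminf_n\sigma_n=\sigma$ for every $x$, and the local form of the mass-distribution principle gives $\dim_H\big(J_\sigma^{(\alpha)}\big)\ge\sigma$.

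The main obstacle is the geometric bookkeeping behind the two mass estimates in the lower bound, that is, controlling the gaps between admissible cylinders. One must verify that a ball of radius $r\le D_n$ cannot reach the admissible block of a \emph{neighbouring} level-$n$ cylinder except for $O(1)$ immediate neighbours: this rests on the facts that the admissible level-$n$ cylinders sharing a parent are mutually contiguous and that the discarded children separating successive blocks have length comparable to, or larger than, $B_{n+1}$, together with the observation that the level-$n$ block itself has length $B_n\asymp D_n s_n\gg D_n$. Keeping track of these separations, the alternating orientation of the inverse branches, and the uniform (in $x$ and $n$) nature of the constants --- while absorbing the slowly varying $\psi$ into $o(\cdot)$ error terms --- is the technical heart of the argument; the remaining steps are the routine computations sketched above.
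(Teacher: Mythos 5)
Your proposal is correct and follows essentially the same route as the paper: your upper bound via a block cover is exactly the paper's cover by the sets $\widetilde{C}_\alpha(\ell_1,\dots,\ell_k)=\bigcup_{m\ge s_{k+1}}C_\alpha(\ell_1,\dots,\ell_k,m)$ evaluated along a subsequence realising the $\liminf$ (with $(N-1)^n\le (s_1\cdots s_n)^{o(1)}$ absorbing the combinatorial factor), and your two-regime local analysis of the near-uniform mass distribution is the paper's case split (\ref{L3.4})/(\ref{L3.5}), with your observation that the local exponent is minimised at the block scale playing the role of the paper's interpolation $\min\{a,b\}\le a^{1-s}b^s$. The ``geometric bookkeeping'' you correctly flag as the technical heart is precisely where the paper invests its effort as well: the $O(1)$-neighbouring-blocks claim is reduced there to the bound $a_n/a_{n+1}\le K$, proved from the slow variation of $\psi$ by a short contradiction argument, combined with the eventually decreasing assumption on $\alpha$.
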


\begin{rem}
A similar situation for continued fractions has been considered by Fan \textit{et al.}  in \cite{chinese}.
\end{rem}

The proofs of both Theorem \ref{origgood} and  Theorem \ref{goodtwo} will be given in Section 2, while Theorem \ref{jarlu} will be proved in Section 3.

 For future reference, let us  now define the cylinder sets associated with the map $L_\alpha$. For each $k$-tuple $(\ell_1, \ldots, \ell_k)$ of positive integers, define the \index{cylinder set!$\alpha$-L\"uroth}\textit{$\alpha$-L\"uroth cylinder set} $C_\alpha (\ell_1, \ldots, \ell_k)$ associated with  the $\alpha$-L\"{u}roth expansion
 to be
\[
C_\alpha (\ell_1, \ldots, \ell_k):=\{[ y_1, y_2, \ldots
]_\alpha:y_i=\ell_i\text{ for } 1\leq i\leq k\}.
\]
It is easy to see that these cylinder sets are closed intervals with endpoints
given by $[\ell_1, \ldots, \ell_k]_\alpha$ and
$[\ell_1,
\ldots, (\ell_k +1)]_\alpha$. If $k$ is even, we have that $[\ell_1, \ldots, \ell_k]_\alpha$ is the left endpoint of this interval. Likewise, if $k$ is odd, $[\ell_1, \ldots, \ell_k]_\alpha$ is the right endpoint. Directly from the values of its endpoints, for the Lebesgue
measure   $\lambda$ of $C_\alpha
(\ell_1, \ldots,
\ell_k)$ we have that
\[\lambda(C_\alpha (\ell_1, \ldots, \ell_k))=
\prod_{i=1}^{k}a_{\ell_i}.\]

It is assumed that the reader is familiar with the definition and basic properties of the Hausdorff dimension of a set in $\R^n$, which we will denote by $\dim_H$. A good reference on the subject is Falconer's book \cite{Fal}. In particular, we will repeatedly use Frostman's Lemma (also known as the mass distribution principle), which can be found as Theorem 4.2 in \cite{Fal}.

\section{Good-type sets}

We begin this section by proving Theorem \ref{origgood}.

\noindent\textit{Proof of Theorem \ref{origgood}.}
By assumption, $\alpha$ is expansive of exponent $\theta\geq0$. Therefore, from Proposition \ref{mdt}, we have that
$a_n\sim \psi(n)\cdot n^{-(1+\theta)}$, where $\psi:\N\to \R^+$ is a slowly-varying function. This implies that $a_n\asymp \psi(n)\cdot n^{-(1+\theta)}$. Since $\psi$ is slowly varying, it follows that for all positive $\varepsilon$ if $n\in\N$ is sufficiently large, we have that $n^{-\var}\leq \psi(n)\leq n^{\var}$.  Thus, on combining these observations, we obtain that
\[
n^{-(1+\theta +\var)}\leq a_n\leq n^{-(1+\theta -\var)}.
\]

Let $\var>0$ be given. Then, recalling from the introduction that $\lambda(C_\alpha(\ell_1, \ldots, \ell_k))=a_{\ell_1}\ldots a_{\ell_k}$, there exists a positive integer $N:=N(\var)$ such that for each $\alpha$-L\"uroth cylinder set $C_\alpha(\ell_1, \ldots, \ell_k)$ with  $\ell_i>N$ for each $1\leq i\leq k$, we have
\begin{eqnarray}\label{cabove}
\frac{1}{(\ell_1 \ldots \ell_k)^{1+\theta+\var}}\leq\lambda(C_\alpha(\ell_1, \ldots, \ell_k))\leq \frac{1}{(\ell_1 \ldots \ell_k)^{1+\theta-\var}}.
\end{eqnarray}

In order to compute the upper bound, let $\delta>0$ and choose $k$ large enough that
\[
\mathcal{C}:=\{C_\alpha(\ell_1, \ldots, \ell_k):\ell_i>N \text{ for } 1\leq i\leq k\}
\]
is a $\delta$-cover of $G_N^{(\alpha)}$. Let $s:=(1+\theta-\var)^{-1}(1+\var_N)$, where $\var_N$ is chosen to satisfy the conditions that $\var_N<1$ and $-\var_N/\log(\var_N)>1/\log N$. Then,
\begin{eqnarray*}
\mathcal{H}_\delta^s\left(G_N^{(\alpha)}\right)&\leq&\sum_{\mathcal{C}}\lambda(C_\alpha(\ell_1, \ldots, \ell_k))^s\leq\sum_{\mathcal{C}}\left(\left(\frac{1}{\ell_1\ldots\ell_k}\right)^{1+\theta-\var}\right)^{(1+\theta-\var)^{-1}(1+\var_N)}\\
&=&\sum_{\mathcal{C}}\left(\frac{1}{\ell_1\ldots\ell_k}\right)^{1+\var_N}=\left(\sum_{i>N}\left(\frac1{i}\right)^{1+\var_N}\right)^k< \left(\int_{N}^\infty x^{-(1+\var_N)}\ dx\right)^k\\
&=&\left(\frac{1}{\var_N N^{\var_N}}\right)^k<1.
\end{eqnarray*}
Thus, as this estimate is independent of $\delta$, we have that $\dim_H\left(G_N^{(\alpha)}\right)\leq s$. Letting $\var>0$ tend to zero and choosing the sequence $(\var_N)_{N\in\N}$ in such a way that $\lim\limits_{N\to \infty}\var_N=0$, we obtain that
\[
\dim_H\left(G_N^{(\alpha)}\right)\leq \frac{1}{1+\theta}.
\]

In order to calculate the desired lower bound, we define a certain subset of the set $G_N^{(\alpha)}$ for each $N\in\N$. First, choose $M\in\N$ to be such that $\sum_{i=N}^M 1/i>1$. 
Denote this sum by $S$. Then define the set
\[
G_{N,M}^{(\alpha)}:=\{x=[\ell_1(x), \ell_2(x), \ldots]_\alpha\in\U:N\leq\ell_i(x)\leq M\text{ for all }i\in\N\}.
\]
Clearly, $G_{N,M}^{(\alpha)}\subseteq G_{N}^{(\alpha)}$ and so a lower bound for the Hausdorff dimension of the subset $G_{N,M}^{(\alpha)}$ is also a lower bound for the set $G_{N}^{(\alpha)}$.
We aim to use Frostman's Lemma, so, to that end, define a mass distribution $\nu$ on the set $G_{N,M}^{(\alpha)}$ by setting
\begin{eqnarray*}
\nu(C_\alpha(\ell_1, \ldots, \ell_k))&:=&\frac{1}{S^k\ell_1\ldots \ell_k}.
\end{eqnarray*}
Note that from (\ref{cabove}), if $N$ is large enough, we have that
\[
\nu(C_\alpha(\ell_1, \ldots, \ell_k))\leq \left(\frac{1}{S}\right)^k\lambda(C_\alpha(\ell_1, \ldots, \ell_k))^{1/(1+\theta+\var)}< \lambda(C_\alpha(\ell_1, \ldots, \ell_k))^{1/(1+\theta+\var)},
\]
where the second inequality comes from the fact that $1/S<1$.
Also note that \[\frac{\lambda(C_\alpha(\ell_1, \ldots, \ell_k))}{\lambda(C_\alpha(\ell_1, \ldots, \ell_k,\ell_{k+1}))}=\frac1{a_{k+1}}\leq \ell_{k+1}^{1+\theta+\var}\leq  M^{1+\theta+\var}.\]

Now, let $x=[\ell_1(x), \ell_2(x), \ldots]_\alpha\in G_{N,M}^{(\alpha)}$, let $r>0$ and further let $k\in\N$ be such that  we have \[\lambda(C_\alpha(\ell_1(x), \ldots, \ell_{k+1}(x)))\leq r<\lambda(C_\alpha(\ell_1(x), \ldots, \ell_{k}(x))).\] It is clear that $C_\alpha(\ell_1(x), \ldots, \ell_k(x), \ell_{k+1}(x))\subset B(x, r)$, but it is possible that $B(x, r)$ intersects more than one cylinder set of length $k$. However, since there are at most $M-N$ possibilities and the $\nu$-measure of each of them is comparable, without loss of generality we can assume that
\[
C_\alpha(\ell_1(x), \ldots, \ell_{k+1}(x))\subset B(x, r)\subset C_\alpha(\ell_1(x), \ldots, \ell_{k}(x)).
\]
Then,
\begin{eqnarray*}
\nu(B(x, r))&\leq& \nu(C_\alpha(\ell_1(x), \ldots, \ell_{k}(x)))\leq \lambda(C_\alpha(\ell_1(x), \ldots, \ell_{k}(x)))^{1/(1+\theta+\var)}\\&\leq&
M\lambda(C_\alpha(\ell_1(x), \ldots, \ell_{k+1}(x)))^{1/(1+\theta+\var)}\leq M r^{1/(1+\theta+\var)}.
\end{eqnarray*}
Hence, by Frostman's Lemma, it follows that
\[
\dim_H\left(G_{N,M}^{(\alpha)}\right)\geq \frac{1}{1+\theta+\var}.
\]
Finally, on letting  $\var$ tend to zero, we have that
\[
\lim_{N\to \infty}\dim_H\left(G_{N,M}^{(\alpha)}\right)\geq \frac{1}{1+\theta}.
\]
Combining this with the upper bound given above finishes the proof of the theorem.

\bewend

Let us now move on to the proof of Theorem \ref{goodtwo}.
The proof will again be split into the lower bound and the upper bound. We begin  with the following useful lemma.

\begin{lem}\label{int}
Suppose that $x=[\ell_1(x), \ell_2(x), \ldots]_\alpha\in F_{\infty}^{(\alpha)}$. Further suppose that
\[
\lambda(C_\alpha(\ell_1(x), \ldots,\ell_k(x), \ell_{k+1}(x) ))\leq r<\lambda(C_\alpha(\ell_1(x), \ldots,\ell_k(x))).
\]
Then, for all sufficiently large $k$,
\[
B(x, r)\subset \bigcup_{i=-1}^{1}C_\alpha(\ell_1(x), \ldots,\ell_k(x)+i ).
\]
\end{lem}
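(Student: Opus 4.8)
The plan is to exploit that the radius $r$ is smaller than the length of the depth-$k$ cylinder containing $x$, so $B(x,r)$ can only spill into cylinders immediately adjacent to $C_\alpha(\ell_1,\ldots,\ell_k)$. Write $P:=\prod_{i=1}^{k-1}a_{\ell_i}=\lambda(C_\alpha(\ell_1,\ldots,\ell_{k-1}))$, so that the depth-$k$ subcylinder of the parent $C_\alpha(\ell_1,\ldots,\ell_{k-1})$ with last digit $m$ has length $Pa_m$, and $x$ lies in the one with $m=\ell_k$. The three cylinders $C_\alpha(\ell_1,\ldots,\ell_k+i)$, $i\in\{-1,0,1\}$, form a single closed interval $I=[p,q]$ with $x$ in its interior, so it suffices to prove $x-p>r$ and $q-x>r$; equivalently, that $B(x,r)$ reaches neither the cylinder with last digit $\ell_k-2$ nor the one with last digit $\ell_k+2$.

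The first step is to locate $x$ inside $C_\alpha(\ell_1,\ldots,\ell_k)$. I would show that the depth-$(k+1)$ subcylinders of $C_\alpha(\ell_1,\ldots,\ell_k)$ accumulate at the endpoint $E$ that $C_\alpha(\ell_1,\ldots,\ell_k)$ shares with $C_\alpha(\ell_1,\ldots,\ell_k-1)$. This is a short computation with the defining map: $E$ is the preimage under $L_\alpha^{k-1}$ of the right endpoint $t_{\ell_k}$ of the atom $A_{\ell_k}$, because $L_\alpha|_{A_{\ell_k}}(y)=(t_{\ell_k}-y)/a_{\ell_k}$ sends $t_{\ell_k}$ to $0$, and $t_{\ell_k}$ is exactly the common endpoint of $A_{\ell_k}$ and $A_{\ell_k-1}$; thus the parity of $k$ is irrelevant. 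Since the subcylinders with last digit $m$ lie closer to $E$ the larger $m$ is, and $x\in C_\alpha(\ell_1,\ldots,\ell_k,\ell_{k+1})$, the point $x$ is separated from $E$ only by the subcylinders with digit at least $\ell_{k+1}$, giving
\[
d(x,E)\leq Pa_{\ell_k}\sum_{m\geq \ell_{k+1}}a_m=Pa_{\ell_k}\,t_{\ell_{k+1}}.
\]

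The two distance estimates then follow. On the $\ell_k-1$ side, the whole cylinder $C_\alpha(\ell_1,\ldots,\ell_k-1)$ lies between $x$ and the cylinder with digit $\ell_k-2$, so that distance is at least $Pa_{\ell_k-1}$; since the partition is eventually decreasing and $\ell_k\to\infty$, we have $a_{\ell_k-1}>a_{\ell_k}>r/P$ for large $k$, so the distance exceeds $r$. On the $\ell_k+1$ side, writing $E'$ for the opposite endpoint of $C_\alpha(\ell_1,\ldots,\ell_k)$ (shared with digit $\ell_k+1$), we have $d(x,E')=Pa_{\ell_k}-d(x,E)$, so the distance from $x$ to the cylinder with digit $\ell_k+2$ is at least
\[
Pa_{\ell_k}\bigl(1-t_{\ell_{k+1}}\bigr)+Pa_{\ell_k+1},
\]
which is at least $Pa_{\ell_k}>r$ as soon as $t_{\ell_{k+1}}\leq a_{\ell_k+1}/a_{\ell_k}$.

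It remains to observe that this last inequality holds for all large $k$: since $\lim_n\ell_n=\infty$, the digit $\ell_{k+1}$ tends to infinity and hence $t_{\ell_{k+1}}\to 0$, while Proposition \ref{mdt} together with $a_n\sim\psi(n)n^{-(1+\theta)}$ and slow variation of $\psi$ gives $a_{\ell_k+1}/a_{\ell_k}\to 1$. Thus the required inequality holds for all sufficiently large $k$, completing the containment. I expect the $\ell_k+1$ side to be the main obstacle: a crude bound by the length $Pa_{\ell_k+1}$ of the neighbouring cylinder does not suffice, since it may be smaller than $r$, and one genuinely needs both the localization of $x$ near the opposite endpoint $E$ and the asymptotic $a_{\ell_k+1}\sim a_{\ell_k}$. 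The orientation bookkeeping required to pin down that $E$ always borders the digit-$(\ell_k-1)$ cylinder is the other point demanding care.
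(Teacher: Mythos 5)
Your proof is correct, and its geometric skeleton is the same as the paper's: both arguments reduce the containment to (i) the eventually-decreasing property of $\alpha$ on the side of the digit-$(\ell_k(x)-1)$ neighbour, and (ii) the key inequality $a_{\ell_k(x)}t_{\ell_{k+1}(x)}\leq a_{\ell_k(x)+1}$ on the opposite side, after localizing $x$ within $Pa_{\ell_k(x)}t_{\ell_{k+1}(x)}$ of the endpoint $E$ of $C_\alpha(\ell_1(x),\ldots,\ell_k(x))$; the paper obtains exactly this localization by expanding $[\ell_1(x),\ldots,\ell_{k+1}(x)]_\alpha-a_{\ell_1(x)}\cdots a_{\ell_k(x)}$ through the alternating series, but only for $k$ odd, whereas your dynamical identification of $E$ as the branch preimage of $t_{\ell_k(x)}$ (the common endpoint of $A_{\ell_k(x)}$ and $A_{\ell_k(x)-1}$) handles both parities at once — a genuine tidying of the orientation bookkeeping the paper leaves to the reader. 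Where you truly diverge is in proving (ii). The paper exploits the monotonicity $\ell_{k+1}(x)\geq\ell_k(x)$ built into $F_\infty^{(\alpha)}$: taking $\var<\theta/6$, it bounds $a_{\ell_k(x)}t_{\ell_{k+1}(x)}\leq \ell_k(x)^{-(1+2\theta-2\var)}$ and $a_{\ell_k(x)+1}\geq(\ell_k(x)+1)^{-(1+\theta+\var)}$ and wins on exponents. You instead rewrite (ii) as $t_{\ell_{k+1}(x)}\leq a_{\ell_k(x)+1}/a_{\ell_k(x)}$ and use $t_{\ell_{k+1}(x)}\to0$ against $a_{n+1}/a_n\to1$; this never uses monotonicity of the digits, so your argument in fact proves the lemma for every $x$ with $\ell_n(x)\to\infty$, i.e.\ on all of $G_\infty^{(\alpha)}$, which is strictly more than the paper's proof delivers (and explains why the paper must state the lemma only for $F_\infty^{(\alpha)}$). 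The one step you should make honest: $a_{n+1}/a_n\to1$ requires $\psi(n+1)/\psi(n)\to1$, which is not literally the pointwise definition of slow variation since the ratio $(n+1)/n$ varies with $n$; you need the Uniform Convergence Theorem for measurable slowly varying functions \cite{BinghamGoldieTeugels:89}. Alternatively, observe that you only need $a_{\ell_k(x)+1}/a_{\ell_k(x)}$ bounded below by a positive constant (since $t_{\ell_{k+1}(x)}\to0$), and the paper itself establishes $a_n/a_{n+1}\leq K$ in Section 3 by an elementary contradiction argument from slow variation — quoting that makes your step self-contained. Note that the crude sandwich $n^{-(1+\theta+\var)}\leq a_n\leq n^{-(1+\theta-\var)}$ alone would \emph{not} suffice here, as it only gives $a_{n+1}/a_n\gg n^{-2\var}$; so the ratio fact (or the paper's monotonicity route) is genuinely needed, and you correctly identified this as the crux.
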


\begin{proof}
We consider here only the case in which $k$ is odd, the case $k$ even is analogous and is left to the reader. Bearing in mind that $x\in C_\alpha(\ell_1(x), \ldots, \ell_{k+1}(x))$, it is clear that if $k$ is sufficiently large, then the right endpoint of $B(x, r)$ cannot extend past the interval $C_\alpha(\ell_1(x), \ldots, \ell_{k}(x)-1)$, as we are assuming that the partition $\alpha$ is eventually decreasing. 
On the other hand, the left endpoint of $B(x, r)$ cannot be smaller than the point $[\ell_1(x), \ldots, \ell_{k+1}(x)]_\alpha- a_{\ell_1(x)}\ldots a_{\ell_k(x)}$. But this point is equal to
\[
\left(t_{\ell_1(x)}-a_{\ell_1(x)}t_{\ell_2(x)}+\ldots+a_{\ell_1(x)}\ldots a_{\ell_{k-1}(x)}t_{\ell_k(x)}-a_{\ell_1(x)}\ldots a_{\ell_k(x)}t_{\ell_{k+1}(x)}\right)-a_{\ell_1(x)}\ldots a_{\ell_k(x)}\]
\[=t_{\ell_1(x)}-\ldots +a_{\ell_1(x)}\ldots a_{\ell_{k-1}(x)}(t_{\ell_k(x)}-a_{\ell_k(x)})-a_{\ell_1(x)}\ldots a_{\ell_k(x)}t_{\ell_{k+1}(x)}\hspace{3.2cm}\]
\[= [\ell_1(x), \ldots, \ell_{k-1}(x), \ell_{k}(x)+1]_\alpha-a_{\ell_1(x)}\ldots a_{\ell_k(x)}t_{\ell_{k+1}(x)}.\hspace{4.85cm}
\]

\vspace{1mm}

\noindent Notice that the point $[\ell_1(x), \ldots, \ell_{k-1}(x), \ell_{k}(x)+1]_\alpha$ is the left endpoint of the the cylinder set $C_\alpha(\ell_1(x), \ldots, \ell_k(x))$, so it only remains to prove that
\[
a_{\ell_1(x)}\ldots a_{\ell_{k-1}(x)}a_{\ell_k(x)}t_{\ell_{k+1}(x)}\leq a_{\ell_1(x)}\ldots a_{\ell_{k-1}(x)}a_{\ell_k(x)+1}=\lambda(C_\alpha(\ell_1(x), \ldots, \ell_k(x)+1)).
\]
In other words, we must show that
\[
a_{\ell_k(x)}t_{\ell_{k+1}(x)}\leq a_{\ell_k(x)+1}.
\]
Recall that $\alpha$ is assumed to be expanding of exponent $\theta\geq0$, so $t_n=n^{-\theta}\cdot \psi(n)$ and $a_n\asymp n^{-(1+\theta)}\cdot \psi(n)$, where $\psi:\N\to\R^+$ is a slowly varying function. Also recall that for each positive $\var$, if $n$ is sufficiently large, we have that  $t_n\leq n^{-(\theta-\var)}$ and $n^{-(1+\theta+\var)}\leq a_n\leq n^{-(1+\theta-\var)}$. Let $\var<\theta/6$. Then, since $x\in F_{\infty}^{(\alpha)}$, so $\ell_k(x)\leq\ell_{k+1}(x)$ for all $k$, we have that
\begin{eqnarray*}
a_{\ell_k(x)}t_{\ell_{k+1}(x)}&\leq& \frac{1}{\ell_k(x)^{(1+\theta-\var)}}\cdot\frac{1}{\ell_{k+1}(x)^{(\theta-\var)}}\\
&\leq&\frac{1}{\ell_k(x)^{(1+2\theta-2\var)}}<\frac{1}{\ell_k(x)^{(1+(5/3)\theta)}}.
\end{eqnarray*}
On the other hand, we also have that
\[
a_{\ell_k(x)+1}\geq \frac{1}{(\ell_k(x)+1)^{(1+\theta+\var)}}>\frac{1}{(\ell_k(x)+1)^{(1+(7/6)\theta)}}.
\]
Therefore, in order to show that $a_{\ell_k(x)}t_{\ell_{k+1}(x)}\leq a_{\ell_k(x)+1}$, it suffices to show that
\[
\frac{1}{\ell_k(x)^{(1+(5/3)\theta)}}<\frac{1}{(\ell_k(x)+1)^{(1+(7/6)\theta)}},
\]
or, equivalently, that
\[
1-\frac{(1/2)\theta}{1+(3/5)\theta}=\frac{1+(7/6)\theta}{1+(10/6)\theta}<\frac{\log(\ell_k(x))}{\log(\ell_k(x)+1)}.
\]
But, since the left-hand side is a fixed amount less than 1, depending only on $\theta$, and the right-hand side tends to 1 as $\ell_k(x)$ increases (that is, as $k$ increases), it follows that if $k$ is large enough, this statement is true. Thus, the left endpoint of $B(x, r)$ lies in $ C_\alpha(\ell_1(x), \ldots,\ell_k(x)+1 )$ and the lemma is proved.

\end{proof}

In the next lemma, we will establish the lower bound for the dimension of $F_\infty^{(\alpha)}$.

\begin{lem}\label{lowboundGinfty}
\[
\frac{1}{1+\theta}\leq \dim_H\left(F_\infty^{(\alpha)}\right).
\]

\end{lem}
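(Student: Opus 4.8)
The plan is to apply Frostman's Lemma to a carefully chosen subset of $F_\infty^{(\alpha)}$. The difficulty, compared with the Good-set computation of Theorem~\ref{origgood}, is that the two defining constraints of $F_\infty^{(\alpha)}$ --- that the digits be non-decreasing and that they diverge to infinity --- cannot be imposed on a free product of digits. My device to enforce both at once is to confine the digits to disjoint, increasing windows, one per position. Concretely, I would fix $\var>0$, take a large integer $L$, set $N_i:=2^iL$ and $M_i:=2^{i+1}L-1$, and consider
\[
E:=\{x=[\ell_1(x),\ell_2(x),\ldots]_\alpha\in\U:N_i\leq \ell_i(x)\leq M_i\text{ for all }i\in\N\}.
\]
Since the windows $[N_i,M_i]$ are pairwise disjoint and increasing, every $x\in E$ satisfies $\ell_{i+1}(x)>M_i\geq\ell_i(x)$ and $\ell_i(x)\geq N_i\to\infty$; hence $E\subset F_\infty^{(\alpha)}$, and it suffices to bound $\dim_H E$ from below.

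On $E$ I would place the mass distribution $\nu$ that equidistributes among the $s_i:=M_i-N_i+1=2^iL$ admissible digits at each level, so that $\nu(C_\alpha(\ell_1,\ldots,\ell_k))=\prod_{i=1}^k s_i^{-1}$ for admissible strings. The central estimate is the ball bound. Given $x\in E$ and small $r$, choose $k$ with $\lambda(C_\alpha(\ell_1,\ldots,\ell_{k+1}))\leq r<\lambda(C_\alpha(\ell_1,\ldots,\ell_k))$. By Lemma~\ref{int} the ball $B(x,r)$ is covered by the three cylinders $C_\alpha(\ell_1,\ldots,\ell_k+i)$, $i\in\{-1,0,1\}$, each of which has $\nu$-measure either $0$ (when $\ell_k+i$ leaves $[N_k,M_k]$, so the cylinder misses $E$) or exactly $\nu(C_\alpha(\ell_1,\ldots,\ell_k))$; thus $\nu(B(x,r))\leq 3\prod_{i=1}^k s_i^{-1}$. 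Using $r\geq\prod_{i=1}^{k+1}a_{\ell_i}$ and $a_{\ell_i}\geq \ell_i^{-(1+\theta+\var)}\geq M_i^{-(1+\theta+\var)}$ (valid for all $i$ once $L$ is large), the desired estimate $\nu(B(x,r))\leq Cr^t$ reduces to verifying
\[
\prod_{i=1}^k s_i\geq \frac{3}{C}\prod_{i=1}^{k+1} M_i^{t(1+\theta+\var)}.
\]

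Writing $\beta:=t(1+\theta+\var)$ and noting that $s_i\asymp M_i$, the required inequality reduces to demanding that $\prod_{i=1}^k M_i^{1-\beta}$ dominate the geometric factor $2^{k}$ coming from the ratios $M_i/s_i$. Since $\log M_i\sim (i+1)\log 2$, the exponent $\sum_{i\leq k}(1-\beta)\log M_i$ grows like $(1-\beta)\tfrac{k^2}{2}\log 2$, which outpaces $k\log 2$ for all large $k$ precisely when $\beta<1$. Hence for every $t<(1+\theta+\var)^{-1}$ the estimate holds with a suitable constant $C$ (the finitely many small $k$ and the single extra factor $M_{k+1}^{\beta}$ being absorbed into $C$), and Frostman's Lemma gives $\dim_H E\geq t$. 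Letting $t\uparrow(1+\theta+\var)^{-1}$ and then $\var\downarrow 0$ yields $\dim_H F_\infty^{(\alpha)}\geq (1+\theta)^{-1}$, as claimed.

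The step I expect to be the real obstacle is not any single computation but the correct choice of model set: because monotonicity rules out a self-similar construction, the measure $\nu$ is genuinely inhomogeneous and its effective dimension is read off from a quadratic-in-$k$ balance rather than from a single contraction ratio. The delicate feature is that the borderline case $\beta=1$ fails --- there the lower-order terms make $\prod_{i\leq k} s_i$ fall behind --- so one must keep $\beta$ strictly below $1$, which is exactly what costs the extra $\var$ and forces the final limit $\var\downarrow 0$.
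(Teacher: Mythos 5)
Your proof is correct, and although it follows the same broad strategy as the paper --- restrict to a sub-Cantor set of $F_\infty^{(\alpha)}$ cut out by digit windows, equip it with a product mass distribution, control balls via Lemma \ref{int}, and finish with Frostman's Lemma --- the construction itself is genuinely different. The paper uses overlapping, slowly varying windows $[f_\var(n),g(n)]$ with $f_\var(n)<g(n)\leq 8f_\var(n)$, weighting each admissible digit $\ell$ at level $n$ by $1/(S_n\ell)$; this gives the clean cylinder-level bound $\nu(C)\leq\lambda(C)^{1/(1+\theta+\var)}$, and the jump factor $1/a_{\ell_{k+1}}\leq g(k+1)^{1+\theta+\var}$ between consecutive cylinder scales is tamed by slow variation, via $g(k+1)\leq (1/r)^{\delta}$, at the cost of a $\delta$-loss in the exponent that must then be sent to zero. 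Your disjoint geometric windows $[2^iL,\,2^{i+1}L)$ with the uniform digit measure replace that mechanism: the entropy sum $\sum_{i\leq k}\log s_i$ and the Lyapunov-type sum $(1+\theta+\var)\sum_{i\leq k}\log M_i$ both grow quadratically in $k$ with ratio tending to $1/(1+\theta+\var)$, so for every $t$ with $\beta=t(1+\theta+\var)<1$ the comparison $\nu(C_k)\leq\lambda(C_k)^t$ holds with a surplus of order $\e^{c(1-\beta)k^2}$, and it is this quadratic surplus that absorbs the jump factor $M_{k+1}^{\beta}\,2^{O(k)}$, which is only linear on the log scale --- no slowly varying machinery and no $\delta$-trick needed, and your remark that the borderline $\beta=1$ fails is exactly right. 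Two further points in your favour: the disjointness of your windows makes both defining conditions of $F_\infty^{(\alpha)}$ automatic on the full support of $\nu$ (the paper instead writes the monotonicity condition into $F_{f_\var,g}^{(\alpha)}$, and since its windows may overlap, its product measure as written also charges some non-monotone digit strings --- a wrinkle your construction avoids); and the uniformity in $x$ of the threshold ``for all sufficiently large $k$'' in Lemma \ref{int}, which the Frostman argument silently requires, is immediate for you since $\ell_k(x)\geq 2^kL$ uniformly on $E$, with $L$ large even handling all $k$ at once. What the paper's slower windows buy in exchange is an essentially scale-by-scale estimate by a measure of nearly exact local dimension, whereas your measure is strongly inhomogeneous and its estimate only takes hold asymptotically, with finitely many scales absorbed into the constant $C$ --- both routes are legitimate, and yours is arguably the more elementary one.
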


\begin{proof}
 For the proof, we will define a suitable subset of $F_\infty^{(\alpha)}$ and use Frostman's Lemma again to obtain the lower bound. So, first let $f_\var:\N\to\N$ be a slowly varying function which satisfies the following properties:
  \begin{itemize}
    \item $\lim_{n\to\infty}f_\var(n)=\infty$.
    \item $f_\var(n)\leq f_\var(n+1)$ for all $n\in\N$.
    \item $f_\var(1)$ is large enough that if $\ell\geq f_\var(1)$, then $a_\ell\geq \ell^{-(1+\theta+\var)}$.
  \end{itemize}
  Now,  define a second function $g:\N\to\N$ by setting $g(n)$ to be the least integer such that
\[
S_n:=\sum_{i=f_\var(n)}^{g(n)}\frac1i>1.
\]
Note that the function $g$ is also slowly varying. Indeed, for any $k\in\N$, if $f_\var(n)\in\{2^k+1, \ldots, 2^{k+1}\}$ it follows that $2^{k+1}\leq g(n)\leq 2^{k+3}$. Hence, $f_\var(n)<g(n)\leq 8 f_\var(n)$.
Finally,  define the set
\[
F_{f_\var, g}^{(\alpha)}:=\left\{x=[\ell_1(x), \ell_2(x), \ldots]_\alpha: f_\var(n)\leq \ell_n(x)\leq g(n) \text{ and } \ell_n(x)\geq \ell_{n-1}(x)\text{ for all }n\in\N\right\}.
\]

It is clear that $F_{f_\var,g}^{(\alpha)}\subset F_\infty^{(\alpha)}$.
Define a mass distribution on $F_{f_\var,g}^{(\alpha)}$ by setting
\[
\nu(C_\alpha(\ell_1(x), \ldots, \ell_k(x))):=\frac{1}{S_1\ldots S_k}\cdot\frac{1}{\ell_1(x) \ldots \ell_k(x)}.
\]
Note that due to the choice of $f_\var$ and $g$, we have that
\[
\nu(C_\alpha(\ell_1(x), \ldots, \ell_k(x)))\leq \lambda(C_\alpha(\ell_1(x), \ldots, \ell_k(x)))^{1/(1+\theta+\var)}.
\]
In addition, observe that
\[
\frac{\lambda(C_\alpha(\ell_1(x), \ldots, \ell_k(x)))}{\lambda(C_\alpha(\ell_1(x), \ldots, \ell_{k+1}(x)))}=\frac{1}{a_{\ell_{k+1}(x)}}\leq \ell_{k+1}(x)^{1+\theta+\var}\leq g(k+1)^{1+\theta+\var}.
\]

As in the proof of Theorem \ref{origgood}, let $r>0$ and choose $k$ such that \[\lambda(C_\alpha(\ell_1(x), \ldots, \ell_{k+1}(x)))\leq r<\lambda(C_\alpha(\ell_1(x), \ldots, \ell_{k}(x))).\] Again, it is clear that $C_\alpha(\ell_1(x), \ldots, \ell_k(x), \ell_{k+1}(x))\subset B(x, r)$, but it is possible that $B(x, r)$ intersects more than one interval in level $k$. There are no longer a fixed finite set of possibilities, but for large enough $k$ (that is, for small enough $r$), we can apply Lemma \ref{int} to conclude that
\[
C_\alpha(\ell_1(x), \ldots, \ell_{k+1}(x))\subset B(x, r)\subset \bigcup_{i=-1}^{1}C_\alpha(\ell_1(x), \ldots,\ell_k(x)+i ).
\]

Now, let $\delta>0$ be arbitrary. Then, recall that $g$ is slowly varying, so that if $k$ is large enough, we have that $g(k+1)\leq(k+1)^\delta<(1/r)^\delta$. Then, the proof of the lemma follows from the following calculation.
\begin{eqnarray*}
\nu(B(x, r))&\ll& \nu(C_\alpha(\ell_1(x), \ldots,\ell_k(x)))\leq \lambda(C_\alpha(\ell_1(x), \ldots,\ell_k(x)))^{1/(1+\theta+\var)}\\
&\leq& g(k+1) \lambda(C_\alpha(\ell_1(x), \ldots,\ell_k(x), \ell_{k+1}(x)))^{1/(1+\theta+\var)}\\
&\ll&g(k+1)\cdot r^{1/(1+\theta+\var)}\\&\leq& r^{1/(1+\theta+\var)-\delta}.
\end{eqnarray*}
Since this is true for all $\delta>0$, an application of Frostman's Lemma yields that
\begin{eqnarray}\label{varsmall}
\frac{1}{1+\theta+\var}\leq \dim_{H}\left(F_{f_\var, g}^{(\alpha)}\right).
\end{eqnarray}
Finally, (\ref{varsmall}) shows that for every $\var>0$ we have that $\dim_H\left(F_{\infty}^{(\alpha)}\right)\geq1/(1+\theta+\var) $, so letting $\var$ tend to zero completes the proof.

\end{proof}

All that remains for the proof of Theorem \ref{goodtwo} is to give the upper bound for the dimension of $G_\infty^{(\alpha)}$. For this, first observe that if we consider the set
\[
G_{N, n_0}^{(\alpha)}:=\left\{x=[\ell_1(x), \ell_2(x), \ldots]_\alpha:\ell_{n}(x)>N\text{ for all }n\geq n_0\right\},
\]
we can easily see that for all $n_0\in\N$ this set has the same dimension as the set $G_N^{(\alpha)}$. It is also clear that for all $N\in\N$ there exists an $n_0$  such that $G_\infty^{(\alpha)}\subset G_{N, n_0}^{(\alpha)}$. Therefore, it follows from Theorem \ref{origgood} that
\[
\dim_{H}\left(G_\infty^{(\alpha)}\right)\leq \frac{1}{1+\theta}.
\]
Taking this observation together with Lemma \ref{lowboundGinfty}, we have proved Theorem \ref{goodtwo}.

\section{Strict Jarn\'{\i}k sets}

In this section, we give the proof of Theorem \ref{jarlu}. Before beginning, notice that for each $\sigma\in\R^+$ the set $J_\sigma^{(\alpha)}$ is contained in the set $G^{(\alpha)}_\infty$. Therefore the dimension can be at most $1/(1+\theta)$. This is consistent with the result given here, since, as we recall from the introduction, we have that $\sigma=1/((1+\theta)+\theta\cdot \tau)$, where $\tau:=\limsup_{n\to\infty}\log(s_{n+1})/\log(s_1\ldots s_n)\geq0$.

\noindent\textit{Proof of Theorem \ref{jarlu}.}
Let us begin by establishing the upper bound. The set $J_\sigma^{(\alpha)}$ can be covered by sets of the form
\[
\widetilde{C}_\alpha(\ell_1, \ldots, \ell_k):=\bigcup_{m\geq s_{k+1}}C_\alpha(\ell_1, \ldots, \ell_k, m),
\]
where $s_i\leq \ell_i< Ns_i$ for each $1\leq i\leq k$. We have that
\[
\lambda(\widetilde{C}_\alpha(\ell_1, \ldots, \ell_k))=a_{\ell_1}\ldots a_{\ell_k}t_{s_{k+1}}.
\]
Recall that since $\alpha$ is expansive of exponent $\theta$ and eventually decreasing, for each positive $\var$, there exists $k\in\N$ such that  $\ell^{-(1+\theta+\var)}\leq a_\ell\leq \ell^{-(1+\theta-\var)}$ for all $\ell\geq k$. Since the sequence $(s_n)_{n\in\N}$ tends to infinity, we may assume without loss of generality that if $x\in J_{\sigma}^{(\alpha)}$, then $(\ell_n(x))^{-(1+\theta+\var)}\leq a_{\ell(x)}\leq (\ell_n(x))^{-(1+\theta-\var)}$ for all $n\in\N$. For each $x\in J_{\sigma}^{(\alpha)}$, these observations lead to the estimate
\[
\frac{1}{(\ell_1(x)\ldots \ell_k(x))^{(1+\theta+\var)}(s_{k+1})^{(\theta+\var)}}\leq\lambda(\widetilde{C}_\alpha(\ell_1(x), \ldots, \ell_k(x)))\leq\frac{1}{(\ell_1(x)\ldots \ell_k(x))^{(1+\theta-\var)}(s_{k+1})^{(\theta-\var)}}.
\]
In turn, this yields
\begin{eqnarray}\label{L3.1}
\ \ \ \ \ \frac{1}{(N^k s_1\ldots s_k)^{(1+\theta+\var)}(s_{k+1})^{(\theta+\var)}}\leq\lambda(\widetilde{C}_\alpha(\ell_1(x), \ldots, \ell_k(x)))\leq\frac{1}{(s_1\ldots s_k)^{(1+\theta-\var)}(s_{k+1})^{(\theta-\var)}}.
\end{eqnarray}

Now, define
\[
\sigma_\var:=\liminf_{n\to\infty}\frac{\log(s_1\ldots s_n)}{(1+\theta-\var)\log(s_1\ldots s_n)+(\theta-\var) \log(s_{n+1})}.
\]

Directly from this definition, we have that if $\sigma'\in(\sigma_\var, 3\sigma_\var)$ and $n$ is sufficiently large, then
\[
\frac{\sigma'-\sigma_\var}{2}\leq \frac{\log(s_1\ldots s_n)}{\log\left((s_1\ldots s_n)^{(1+\theta-\var)}(s_{n+1})^{(\theta-\var)}\right)}.
\]
Thus,
\[
\left(\frac{1}{(s_1\ldots s_n)^{(1+\theta-\var)}(s_{n+1})^{\theta-\var}}\right)^{\frac{\sigma'-\sigma_\var}{2}}\leq\left(\frac{1}{(s_1\ldots s_n)^{(1+\theta-\var)}(s_{n+1})^{\theta-\var}}\right)^{\frac{\log(s_1\ldots s_n)}
{\log(s_1\ldots s_n)^{(1+\theta-\var)}(s_{n+1})^{(\theta-\var)}}}=\frac1{s_1\ldots s_n}.
\]
It follows that $s_1\ldots s_n\leq \left((s_1\ldots s_n)^{(1+\theta-\var)}(s_{n+1})^{\theta-\var}\right)^{\frac{\sigma'-\sigma_\var}{2}}$. Now, since
 $\lim_{n\to\infty}s_n=\infty$, we immediately have that $\lim_{n\to\infty}\log(s_n)=\infty$ and this in turn implies that $\lim_{n\to\infty}(\log(s_1\ldots s_n))/n=\infty$. Therefore, for large enough $n\in\N$ we have that $\log(N-1)<\log(s_1\ldots s_n)/n$. From this, we obtain that
\begin{eqnarray}\label{L3.2}
(N-1)^n\leq \left((s_1\ldots s_n)^{(1+\theta-\var)}(s_{n+1})^{\theta-\var}\right)^{\frac{\sigma'-\sigma_\var}{2}}.
\end{eqnarray}

Again from the definition of $\sigma_\var$, there exists a sequence $(n_k)_{k\in\N}$ of positive integers such that if $\sigma'>\sigma_\var$, we have
\[
\frac{\log(s_1\ldots s_{n_k})}{\log\left((s_1\ldots s_{n_k})^{(1+\theta-\var)}(s_{n_k+1})^{(\theta-\var)}\right)}\leq \frac{\sigma'+\sigma_\var}{2}.
\]
Thus,
\begin{eqnarray}\label{L3.3}
s_1\ldots s_{n_k}\leq \left((s_1\ldots s_{n_k})^{(1+\theta-\var)}(s_{n_k+1})^{\theta-\var}\right)^{\frac{\sigma'+\sigma_\var}{2}}.
\end{eqnarray}
Consequently, if we neglect any terms of the sequence $(n_k)$ that are too small and rename the sequence accordingly, by combining the estimates in (\ref{L3.2}) and (\ref{L3.3}), we obtain for all $k\geq1$ that
\[
(N-1)^{n_k}s_1\ldots s_{n_k}\leq \left((s_1\ldots s_{n_k})^{(1+\theta-\var)}(s_{n_k+1})^{\theta-\var}\right)^{\sigma'}.
\]

Thus,
\begin{eqnarray*}
\mathcal{H}^{\sigma'}(J_\sigma^{(\alpha)})&=&\liminf_{k\to\infty}\sum_{(\ell_1, \ldots, \ell_{n_k})\atop s_i\leq\ell_i<Ns_i}\lambda(\widetilde{C}_\alpha(\ell_1, \ldots, \ell_k))^{\sigma'}\\
&\leq& (N-1)^{n_k}s_1\ldots s_{n_k}\cdot\left((s_1\ldots s_{n_k})^{-(1+\theta-\var)}(s_{n_k+1})^{-(\theta-\var)}\right)^{\sigma'}\leq1.
\end{eqnarray*}

Hence, for all $\var>0$ and all $\sigma'>\sigma_\var$, we have that $\dim_H(J_\sigma^{(\alpha)})\leq \sigma'$ and so,  $\dim_H(J_\sigma^{(\alpha)})\leq \sigma_\var$. It therefore follows, on letting $\var$ tend to zero, that
\[
 \dim_H(J_\sigma^{(\alpha)})\leq \sigma.
\]

Let us now provide the lower bound. For this, as usual, we will use Frostman's Lemma. To that end, define a mass distribution $m$ on $J_\sigma^{(\alpha)}$ by setting $m(C_\alpha(\ell_1, \ldots, \ell_k))=1/(\ell_1\ldots\ell_k)$. Let $x\in J_\sigma^{(\alpha)}$, $r>0$ and choose $k$ such that
\[
\lambda(\widetilde{C}_\alpha(\ell_1(x), \ldots, \ell_{k+1}(x)))\leq r <\lambda(\widetilde{C}_\alpha(\ell_1(x), \ldots, \ell_k(x))).
\]
There are now two possibilities. Either,
\begin{eqnarray}\label{L3.4}
\lambda(\widetilde{C}_\alpha(\ell_1(x), \ldots, \ell_{k+1}(x)))\leq r <\lambda({C}_\alpha(\ell_1(x), \ldots, \ell_k(x), \ell_{k+1}(x))),
\end{eqnarray}
or,
\begin{eqnarray}\label{L3.5}
\lambda({C}_\alpha(\ell_1(x), \ldots, \ell_{k+1}(x)))\leq r <\lambda(\widetilde{C}_\alpha(\ell_1(x), \ldots, \ell_k(x))).
\end{eqnarray}

Suppose we are in the situation of (\ref{L3.4}) and, for simplicity, assume that $k$ is odd. It is clear that if $k$  is large enough, the left endpoint of the ball $B(x, r)$ cannot extend past the cylinder set $C_{\alpha}(\ell_1(x), \ldots, \ell_{k+1}(x)-1)$ (since $\alpha$ is assumed to be eventually decreasing). On the other hand, the right endpoint cannot be larger than $[\ell_1(x), \ldots, \ell_{k+1}, 1]_\alpha+ a_{\ell_1(x)}\ldots a_{\ell_{k+1}(x)}t_{s_{k+2}}$. We claim that as long as $k$ is chosen large enough, this point lies inside $C_{\alpha}(\ell_1(x), \ldots, \ell_{k+1}(x)+1)$. To prove this claim, we are required to show that
\[
a_{\ell_1(x)}\ldots a_{\ell_{k+1}(x)}t_{s_{k+2}}<a_{\ell_1(x)}\ldots a_{\ell_{k+1}(x)+1},
\]
or, in other words, that
\[
a_{\ell_{k+1}(x)}t_{s_{k+2}}<a_{\ell_{k+1}(x)+1}.
\]
Note that by choosing $k$ sufficiently large, the value of $t_{s_{k+2}}$ can be made as small as we like, so it is enough to show that there exists some constant $K$ with the property that for all large enough $n\in\N$,
\[
\frac{a_{n}}{a_{n+1}}\leq K.
\]
Since $\alpha$ is expansive of exponent $\theta$, we have that
\[
\frac{a_{n}}{a_{n+1}}\leq  \frac{c(n+1)^{1+\theta}\psi(n)}{n^{1+\theta}\psi(n+1)}.
\]
It is obvious that $\lim_{n\to\infty}((n+1)/n)^{1+\theta}=1$, so all that remains to establish the claim is to show that $\lim_{n\to\infty}{\psi(n)}/{\psi(n+1)}\leq1$. In order to do this, suppose by way of contradiction that \[\lim_{n\to\infty}{\psi(n)}/{\psi(n+1)}>1.\] Then, recalling that $\psi$ is a slowly-varying function, we have that $\lim_{n\to\infty}{\psi(cn)}/{\psi(n)}=1$ for all $c>0$. Therefore,  we obtain that
\[
\lim_{n\to\infty}\left(\frac{\psi(n)}{\psi(2n)}\cdot\frac{\psi(2n)}{\psi(n+1)}\right)=\lim_{n\to\infty}\frac{\psi(2n)}{\psi(n+1)}>1.
\]
Thus, there exists $n_0\in\N$ such that for all $n\geq n_0$, we have that
\[
\frac{\psi(n)}{\psi(n+1)}>1\ \text{ and }\ \frac{\psi(2n)}{\psi(n+1)}>1.
\]
This implies that for $n\geq n_0$ we have
\[
\psi(n)>\psi(n+1)>\psi(n+2)>\ldots> \psi(2n-1)>\psi(2n)>\psi(n+1).
\]
This contradiction finishes the proof.

In a slight abuse of notation, let us redefine the quantity $\sigma_\var$ used above in the following way:
\[
\sigma_\var:=\liminf_{n\to\infty}\frac{\log(s_1\ldots s_n)}{(1+\theta+\var)\log(s_1\ldots s_n)+(\theta+\var) \log(s_{n+1})}.
\]

We have shown that $B(x, r)\subset \bigcup_{i=-1}^1C_{\alpha}(\ell_1(x), \ldots, \ell_{k+1}(x)+i)$. 
Therefore, if we let $\sigma'<\sigma_\var$ and bear in mind that $r\geq a_{\ell_1(x)}\ldots a_{\ell_{k+1}(x)}t_{s_{k+2}}$, we obtain, via (\ref{L3.1}) and the definition of $\sigma_\var$, that
\begin{eqnarray*}
m(B(x, r))&\leq& 3m(C_{\alpha}(\ell_1(x), \ldots, \ell_{k+1}(x)))\leq\frac3{s_1\ldots s_{k+1}}\\
&\leq&3\left(\frac{1}{(s_1\ldots s_{k+1})^{(1+\theta+\var)}(s_{k+2})^{(\theta+\var)}}\right)^{\sigma'}\\
&\leq& 3r^{\sigma'}.
\end{eqnarray*}

In this case, then, an application of Frostman's Lemma yields that for all $\var>0$ and all $\sigma'<\sigma_\var$, we have that
\[
\dim_{H}{\left(J_\sigma^{(\alpha)}\right)}\geq \sigma'.
\]

Let us now consider the second case, that of (\ref{L3.5}). Again, suppose for the sake of argument that $k$ is odd. Then, it is clear once more that if $k$ is large enough, the right endpoint of $B(x, r)$ cannot extend past the cylinder set $C_\alpha(\ell_1(x), \ldots, \ell_k(x)-1)$, since
$\alpha$ is eventually decreasing. On the other hand, the left endpoint of $B(x, r)$ is not less than $[\ell_1(x), \ldots, \ell_k(x)]_\alpha-2a_{\ell_1(x)}\ldots a_{\ell_k(x)}t_{s_{k+1}}$. If $k$ is sufficiently large, it is clear that $2a_{\ell_1(x)}\ldots a_{\ell_k(x)}t_{s_{k+1}}<a_{\ell_1(x)}\ldots a_{\ell_k(x)}$ (as $t_{s_{k+1}}$ can be made arbitrarily small by choosing large enough $k$). This implies that the left endpoint of $B(x, r)$ is contained within the cylinder set $C_\alpha(\ell_1(x), \ldots, \ell_k(x))$ and consequently $B(x, r)$ can only intersect the sets $\widetilde{C}_\alpha(\ell_1(x), \ldots, \ell_k(x))$ and $\widetilde{C}_\alpha(\ell_1(x), \ldots, \ell_k(x)-1)$ in this level.

Also, note that the smallest size of a cylinder set in the $(k+1)$-th level is $(N^{k+1}s_1\ldots s_{k+1})^{-(1+\theta+\var)}$. Consequently, at most $2r(N^{k+1}s_1\ldots s_{k+1})^{(1+\theta+\var)}$ of these cylinder sets can intersect $B(x, r)$. Taking these observations together, we have that
\begin{eqnarray*}
m(B(x, r))&\leq& \min\left\{2m(\widetilde{C}_\alpha(\ell_1(x), \ldots, \ell_k(x))), \left(2r(N^{k+1}s_1\ldots s_{k+1})^{(1+\theta+\var)}\right)m\left(C_\alpha(\ell_1(x), \ldots, \ell_{k+1}(x))\right)\right\}\\
&\leq&\min \left\{ \frac{2}{s_1\ldots s_k}, \frac{2(N^{k+1}s_1\ldots s_{k+1})^{(1+\theta+\var)}\cdot r}{s_1\ldots s_k s_{k+1}}\right\}\\
&=&\frac{2}{s_1\ldots s_k}\left\{1, \left((N^{k+1}s_1\ldots s_{k})^{(1+\theta+\var)}(s_{k+1})^{(\theta+\var)}\right)\cdot r\right\}.
\end{eqnarray*}
Note that $\min\{a, b\}\leq a^{1-s}b^s$ for all $s\in(0,1)$ and let $\sigma'<\sigma_\var$. It follows from this that
\[
m(B(x, r))\leq \frac{2}{s_1\ldots s_k}\left((N^{k+1}s_1\ldots s_{k})^{(1+\theta+\var)}(s_{k+1})^{(\theta+\var)}\right)^{\sigma'}r^{\sigma'}.
\]
By definition of $\sigma_\var$, we have for all $\sigma'<\sigma_\var$ and all large enough $k$ that
\[
\frac{1}{s_1\ldots s_k}\leq\left((N^{k+1}s_1\ldots s_{k})^{(1+\theta+\var)}(s_{k+1})^{(\theta+\var)}\right)^{\sigma'}.
\]
Thus,
\[
m(B(x, r))\leq 2 r^{\sigma'}.
\]
Therefore, as in the case of (\ref{L3.4}) described above, for
all $\var>0$ and all $\sigma'<\sigma_\var$, we have that
\[
\dim_{H}{\left(J_\sigma^{(\alpha)}\right)}\geq \sigma'.
\]
Finally, since this holds in both cases for all $\sigma'<\sigma_\var$, we first obtain that $\dim_{H}{\left(J_\sigma^{(\alpha)}\right)}\geq \sigma_\var$ and then, by letting $\var$ tend to zero, we obtain that
\[
\dim_{H}{\left(J_\sigma^{(\alpha)}\right)}\geq \sigma.
\]
Combining this lower bound with the upper bound given above completes the proof of the theorem.

\bewend

\thanks{The author would like to thank the organisers of the Numeration workshop for providing such a stimulating working environment. She would also like to thank her supervisor for many helpful discussions.}

\end{document}